\DeclareMathOperator{\dist}{dist}
\DeclareMathOperator{\argmin}{argmin}
\DeclareMathOperator{\argmax}{argmax}
\DeclareMathOperator{\proj}{proj}
\begin{document}
\newtheorem{oberklasse}{OberKlasse}
\newtheorem{definition}[oberklasse]{Definition}
\newtheorem{lemma}[oberklasse]{Lemma}
\newtheorem{proposition}[oberklasse]{Proposition}
\newtheorem{theorem}[oberklasse]{Theorem}
\newtheorem{corollary}[oberklasse]{Corollary}
\newtheorem{remark}[oberklasse]{Remark}
\newtheorem{example}[oberklasse]{Example}

\newcommand{\R}{\mathbbm{R}}
\newcommand{\N}{\mathbbm{N}}
\newcommand{\Z}{\mathbbm{Z}}
\newcommand{\mc}{\mathcal}
\newcommand{\eps}{\varepsilon}
\renewcommand{\phi}{\varphi}

\allowdisplaybreaks[1] 

\title{A learning-enhanced projection method for solving convex feasibility 
problems}
\author{Janosch Rieger}
\date{\today}
\maketitle

\begin{abstract}
We propose a generalization of the method of cyclic projections,
which uses the lengths of projection steps carried out in the past 
to learn about the geometry of the problem and decides on this basis
which projections to carry out in the future.
We prove the convergence of this algorithm and illustrate its
behavior in a first numerical study.
\end{abstract}

\medskip

\noindent\textbf{MSC Codes:} 65H20, 52B55, 37B20, 90C59  

\noindent\textbf{Keywords:} Method of cyclic projections, 
acceleration of convergence, convex feasibility problem

\section{Introduction}

The method of cyclic projections, originally proposed in \cite{Bregman},
is an established numerical algorithm, which computes a point in the
intersection of finitely many closed convex subsets of a Hilbert space 
when this intersection is nonempty.
A broad overview over convergence properties of this method as well as the
underlying theory is given in \cite{BB}, \cite{Bauschke:Combettes},  
\cite{Deutsch} and the references therein.

\medskip

Estimates for the speed of convergence of the method of cyclic projections
are well-known in the case when the sets are affine linear subspaces.
For this situation, accelerated variants of the original scheme, which are 
often based on line-search ideas, have been developed, see e.g.\ 
\cite{Bauschke} and \cite{Gearhart}.
Recently, a first result on the speed of convergence of the method of cyclic 
projections has been given in the case of semi-algebraic sets, see \cite{Guoyin}.
In general, however, the method can be arbitrarily slow, 
see \cite{Franchetti} for a pathological example.

\medskip

When the sets are affine linear subspaces with codimension 1, 
the method of cyclic projections reduces to the Kaczmarz method, 
see \cite{Kaczmarz}, which has gained popularity in the context of very large, 
but sparse consistent linear systems, see \cite{Censor}.
A probabilistic version of this algorithm, which converges exponentially 
in expectation, has been introduced in \cite{Strohmer}, and an accelerated
version of this method has been proposed in \cite{Bai}.
The Kaczmarz method is frequently used in medical imaging, see \cite{Jiang},
where block and column action strategies have become a topic of
active research interest \cite{Li} and \cite{Elfving}.

\medskip

The numerical method presented in this paper is supposed to accelerate
the method of cyclic projections in settings where the above-mentioned 
refined algorithms for subspaces are not applicable.
The guiding idea behind the method is to gather as much information 
on the relative geometry of the closed convex sets from the lengths of the
projection steps carried out in the past.
This is motivated by the convergence proof in \cite{Bregman}, which
reveals that the performance of the algorithm is in worst case determined 
by the lengths of the projection steps carried out.

\medskip

We prove that our method converges, using techniques which are 
common in the dynamical systems community.
The main challenge is to guarantee convergence for a reasonably 
broad class of strategies our basic algorithm can be equipped with.
As it seems very hard to quantify a speed of convergence even in the 
subspace case, we provide several numerical studies performed on a toy
example, which provide some insight as to why and how our method can
outperform the standard methods of cyclic and random projections.

\section{The algorithm} \label{sec2}

Given closed convex sets $C_1,\ldots,C_N\subset\R^d$ with 
$C:=\cap_{j=1}^NC_j\neq\emptyset$, we wish to find a point $x^*\in C$.
We first present two common projection algorithms for solving 
this problem in Section \ref{sec:benchmark}.
Then we propose a new projection algorithm in Section \ref{sec:learning},
which learns the geometry of the problem to some extent from the lengths 
of the projection steps carried out in the past and uses this knowledge 
to select favourable projections in the future.

\medskip

The notation used in this paper is mostly standard.
Given a point $x\in\R^d$ and a closed convex set $C\subset\R^d$,
it is well-known that the projection
\[\proj(x,C):=\argmin_{z\in C}\|x-z\|\]
of $x$ to $C$ exists and is a unique point.

By $\mathrm{randperm}(1,\ldots,N)$ we denote a permutation of the
numbers $1,\ldots,N$ which is sampled uniformly from the set
of all such permutations, and by $\mathrm{urs}(I)$, we denote a 
uniform random sample from an index set $I\subset\{1,\ldots,N\}$.

\subsection{The benchmark: MCP and MRP} \label{sec:benchmark}

The now classical method of cyclic projections, which was
originally published in \cite{Bregman}, approximates a point $x^*\in C$ 
by iteratively projecting to the sets $C_1,\ldots,C_N$ in a cyclic fashion,
see Algorithm \ref{alg:MCP}.

\begin{algorithm}
\caption{Method of cyclic projections (MCP)}
\label{alg:MCP}
\KwIn{$C_1,\ldots,C_N\subset\R^d$, $x_0\in\R^d$}
\For{$k\gets 0$ \KwTo $\infty$} {
	$x_{k+1}\gets\proj(x_k,C_{\mathrm{mod}(k,N)+1})$\;}	
\end{algorithm}

Algorithm \ref{alg:MCP} may converge very slowly when many of the 
projection steps are small.
This behavior may originate from an unfavorable ordering of the 
sets $C_1,\ldots,C_N$, which can be helped by randomly shuffling 
the order of the sets in every cycle, see Algorithm \ref{alg:MRP}.

The random Kaczmarz method proposed in \cite{Strohmer} is a prominent
variant of Algorithm \ref{alg:MRP} in the framework of row-action methods
for solving linear systems, which is known to converge in expectation.
Since MRP slightly outperformed the random Kaczmarz method in all examples
we have studied, we use MRP as the benchmark for randomized algorithms.

\begin{algorithm}
\caption{Method of randomized projections (MRP)}
\label{alg:MRP}
\KwIn{$C_1,\ldots,C_N\subset\R^d$, $x^{(0)}_0\in\R^d$}
\For{$k\gets 0$ \KwTo $\infty$} {
$\pi\gets\mathrm{randperm}(1,\ldots,N)$\;
\For{$j\gets 0$ \KwTo $N-1$}{
	$x^{(k)}_{j+1}\gets\proj(x^{(k)}_j,C_{\pi(j+1)})$\;}
$x^{(k+1)}_0\gets x^{(k)}_N$\;}
\end{algorithm}

\subsection{A projection algorithm with learning ability} \label{sec:learning}

The idea behind Algorithm \ref{alg:PAM} (PAM) is to keep a record of 
the lengths of projection steps performed in the past and to give preference 
to operations that have lead to large projection steps.
This enables our algorithm to learn to some extent the geometry of the
problem with manageable additional computational cost.

From a formalistic point of view, our approach resembles to some extent 
the techniques of \emph{loping} and \emph{flagging} introduced in
\cite{Elfving} in the setting of row-action methods.
These techniques suppress the effect of noise in the data on MCP by ignoring
projections which had very small residuals in previous cycles.
From a phenomenological perspective, however, these modifications
of MCP do not have much in common with PAM.

\medskip

\begin{algorithm}
\caption{Projection algorithm with memory (PAM)}
\label{alg:PAM}
\KwIn{$C_1,\ldots,C_N\subset\R^d$, 
$x_0\in C_1$, 
$D^0\in\R^{N\times N}_{\ge 0}$,
$\phi:\{1,\ldots,N\}\times\R^{N\times N}_{\ge 0}\to\R_{\ge 0}$}
$j_0\gets 1$\;
\For{$k\gets 0$ \KwTo $\infty$} {
  \tcc{carry out most promising admissible projection}
	$j_{k+1}\gets\mathrm{urs}(\argmax_{\ell\in\{1,\ldots,N\}
	\setminus\{j_k\}}D_{j_k,\ell}^k)$\;
	$x_{k+1}\gets\proj(x_k,C_{j_{k+1}})$\;
	\tcc{update distance matrix}	
	$D^{k+1}\gets D^k$\;
	$D_{j_k,j_{k+1}}^{k+1}\gets\max\{\|x_{k+1}-x_k\|,\phi(j_k,D^k)\}$\;}	
\end{algorithm}

In the following, we give an intuitive description how some of the
individual components interact in Algorithm \ref{alg:PAM}.
They will be treated with proper mathematical rigour in the next section.

\begin{itemize}
\item [i)] The sequence of matrices $(D^k)_{k\in\N}$ records -- up to
the impact of the function $\phi$ -- the length of the $k$-th
projection step from set $C_{j_k}$ to $C_{j_{k+1}}$ in the component 
$D^{k+1}_{j_k,j_{k+1}}$.
\item [ii)] The input $D^0$ has three distinct effects.
\begin{itemize}
\item [a)] If $D^0_{m,n}=0$, a transition from $C_m$ to $C_n$ will 
not occur during the entire runtime of the algorithm, 
see Lemma \ref{sparsity}.
Thus, by choosing a sparse $D^0$ as in Example \ref{some:matrices}(i), 
one can limit the amount of information that needs to be stored and
processed at runtime.
For a graphic illustration, see Example \ref{sparseD0},
and for the impact on performance in the context of a toy model, 
see Example \ref{bandwidth:err}.
\item [b)] The choice of $D^0$ can incorporate a priori knowledge:
The more likely a transition from $C_m$ to $C_n$ is to be beneficial,
the larger the entry $D^0_{m,n}$ should be chosen, see Example
\ref{some:matrices}(ii).
\item [c)] If the entries of $D^0$ are small relative to the first 
several lengths $\|x_{k+1}-x_k\|$ of steps to be carried out, the algorithm
will not perform well in an initial stage, see Example \ref{scaling}(ii).
If they are larger, the algorithm will initially behave like MRP, see
Example \ref{scaling}(i).
\end{itemize}
\item [iii)] The function $\phi$ modifies the step length
before it is recorded in the matrix $D^{k+1}$.
\begin{itemize}
\item [a)] It ensures that only strictly positive values are 
written into $D^{k+1}$.
\item [b)] It determines at what level of overall
performance a transition from $C_m$ to $C_n$ will get reactivated
after it generated a short step.
\end{itemize}
\end{itemize}

Finally, we would like to mention that we represent the recorded 
step-lengths in a matrix $D^k$ to keep the notation manageable.
Depending on the size of the problem, the sparsity pattern of $D^0$
and the policy $\phi$, it can be beneficial to use a different data
structure -- such as one red-black tree per set $C_j$ -- 
to store and search this data with moderate on-cost compared 
to MCP and MRP.

\section{Admissible input}

For Algorithm \ref{alg:PAM} to converge, we require the inputs
to have certain properties.
The matrix $D^0$ is required to be irreducible in the following sense.

\begin{definition}[admissible matrix]\label{def:irr}
A matrix $D\in\R^{N\times N}_{\ge 0}$ is called admissible if it
satisfies
\begin{itemize}
\item [i)] $D_{m,m}=0$ for all $m\in\{1,\ldots,N\}$, and
\item [ii)] for any indices $m,n\in\{1,\ldots,N\}$ with $m\neq n$, there
exist some $\ell\in\N$ and indices $i_1,\ldots,i_\ell\in\{1,\ldots,N\}$ 
such that
\[i_1=m,\quad i_\ell=n,\quad\text{and}\quad
D_{i_s,i_{s+1}}>0\quad\forall\,s\in\{1,\ldots,\ell-1\}.\]
\end{itemize}
\end{definition}

We give a few examples how the matrix $D^0$ can be chosen.

\begin{example}[some admissible matrices]\label{some:matrices}
For a good performance of Algorithm \ref{alg:PAM}, it is helpful to
multiply the matrices proposed below with a positive scalar to ensure
that their respective nonzero entries are -- at least on average and 
for small $k$ -- similar to or larger than the length $\|x_{k+1}-x_k\|$ 
of the $k$-th projection step from set $C_{j_k}$ to $C_{j_{k+1}}$.

\medskip

i) To limit the effective size of the matrices $D^k$, one can choose 
$D^0$ to be a sparse matrix such as the banded matrices
$D^\leftrightarrow\in\R^{N\times N}$ given by
\begin{equation*}
D^\leftrightarrow_{m,n}=\begin{cases}
1,& 0<|n-m|\le\omega\ \text{or}\ N+m-n\le\omega\ \text{or}\
N+n-m\le\omega,\\0,& \text{otherwise}
\end{cases}\end{equation*}
and $D^\rightarrow\in\R^{N\times N}$ given by
\begin{equation*}
D^\rightarrow_{m,n}=\begin{cases}
1,& 0<n-m\le\omega\ \text{or}\ N+n-m\le\omega,\\
0,& \text{otherwise}
\end{cases}\end{equation*}
with some $\omega\in\N$ with $1\le\omega\ll N$.

\medskip

ii) In scenarios, where the concept of an angle makes sense, it is
reasonable to work with a matrix $D^\angle\in\R^{N\times N}$ given by
\[D_{m,n}^\angle=\begin{cases}\gg 1,&\angle(C_m,C_n)\ 
\text{known to be large},\\
1,&\angle(C_m,C_n)\ \text{unknown},\\
\ll 1,&\angle(C_m,C_n)\ \text{known to be small},\\
0,& m=n\end{cases}\]
to introduce a bias in favour of transitions with large angles, 
which are more likely to result in large step-lengths.

\medskip

It is easy to check that the above matrices are admissible.
Please note that MCP is a special case of Algorithm \ref{alg:PAM},
which can be realized by choosing $D^0$ to be the matrix 
$D^{\rightarrow}$ with $\omega=1$.
\end{example}

The function $\phi$ is required to be strictly positive on all 
meaningful input, and it must ensure a certain decay of the 
entries of $D$.

\begin{definition}[admissible policies]\label{policies}
A function 
\[\phi:\{1,\ldots,N\}\times\R^{N\times N}_{\ge 0}\to\R_{\ge 0}\]
is called an admissible policy if there exists $\beta\in(0,1)$ such that
\begin{itemize}
\item [i)] $\phi(m,D)>0$ holds for all $m\in\{1,\ldots,N\}$ and
$D\in\R^{N\times N}_{\ge 0}$ satisfying $\max_nD_{m,n}\neq 0$, and
\item [ii)] $\phi(m,D)\le\beta\max_nD_{m,n}$
for all $m\in\{1,\ldots,N\}$ and $D\in\R^{N\times N}_{\ge 0}$.
\end{itemize}
\end{definition}

We propose some particular policies $\phi$.

\begin{example}[some admissible policies]
It is easy to check that both policies proposed below are indeed admissible
for every $\beta\in(0,1)$.

\medskip

i) The function
\[\phi_{\min}(m,D):=\beta\min_{\{n:D_{m,n}>0\}}D_{m,n}\]
ensures that the number which is written into the distance
matrix $D^{k+1}$ in line 7 of Algorithm \ref{alg:PAM} is at least $\beta$
times the minimal previously recorded step-length from $C_m$
to another $C_n$.

\medskip

ii) The function
\[\phi_{\text{av}}(m,D):=\frac{\beta}{\#\{n:D_{m,n}>0\}}\sum_{\{n:D_{m,n}>0\}}D_{m,n}\]
ensures that the number which is written into $D^{k+1}$ in line 7 of Algorithm \ref{alg:PAM} is at least $\beta$ times the average of the
previously recorded step-lengths from $C_m$ to another $C_n$.

\medskip

Note that the value of the minimal nonzero entry in a
row as well as the average of the nonzero entries in a row
can be updated with negligible computational cost in every step.
\end{example}

The proof of the following statement is elementary.

\begin{lemma}[preservation of sparsity pattern]\label{sparsity}
Let both $D^0\in\R^{N\times N}_{\ge 0}$ and 
$\phi:\{1,\ldots,N\}\times\R^{N\times N}_{\ge 0}\to\R_{\ge 0}$ 
be admissible, and let $(D^k)_{k\in\N}\in(\R^{N\times N}_{\ge 0})^\N$ 
be the matrices generated by Algorithm \ref{alg:PAM} with arbitrary initial 
value $x\in\R^d$.
Then for any $k\in\N$ and $m,n\in\{1,\ldots,N\}$, we have
\[D^k_{m,n}>0\quad\text{if and only if}\quad D^0_{m,n}>0,\]
and, in particular, the matrices $D^k$ are admissible for all $k\in\N$.
\end{lemma}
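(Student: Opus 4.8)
The plan is to proceed by induction on $k$, carrying as induction hypothesis the full biconditional $D^k_{m,n}>0\iff D^0_{m,n}>0$ \emph{together with} the admissibility of $D^k$. The base case $k=0$ is immediate. For the inductive step, I would first observe that one iteration of Algorithm \ref{alg:PAM} modifies at most a single entry of the matrix: the assignment $D^{k+1}\gets D^k$ copies the whole matrix, and the update in line 7 overwrites only the entry $(j_k,j_{k+1})$. Since $j_{k+1}$ is drawn from $\argmax_{\ell\neq j_k}D^k_{j_k,\ell}$, we have $j_{k+1}\neq j_k$, so the overwritten entry is off-diagonal. Consequently every entry other than $(j_k,j_{k+1})$ keeps its value, and the biconditional for those entries is inherited directly from the induction hypothesis; in particular all diagonal entries remain zero.

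It then remains to verify the biconditional at the single entry $(j_k,j_{k+1})$. The key step is to show $D^0_{j_k,j_{k+1}}>0$. By the induction hypothesis $D^k$ is admissible, so condition (ii) of Definition \ref{def:irr} guarantees that row $j_k$ contains at least one positive off-diagonal entry, namely the first edge of a path leaving $j_k$ (this entry is off-diagonal because $D^k_{j_k,j_k}=0$ by condition (i)). Hence $\max_{\ell\neq j_k}D^k_{j_k,\ell}>0$, and the selected index $j_{k+1}$ realizing this maximum satisfies $D^k_{j_k,j_{k+1}}>0$; by the induction hypothesis this yields $D^0_{j_k,j_{k+1}}>0$. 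For the converse direction I would invoke the admissibility of $\phi$: since $\max_n D^k_{j_k,n}\neq 0$, condition (i) of Definition \ref{policies} gives $\phi(j_k,D^k)>0$, so the value written in line 7,
\[D^{k+1}_{j_k,j_{k+1}}=\max\{\|x_{k+1}-x_k\|,\phi(j_k,D^k)\}\ge\phi(j_k,D^k)>0,\]
is strictly positive regardless of the length of the projection step. This closes the biconditional at $(j_k,j_{k+1})$, and hence for all pairs $(m,n)$.

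Finally, admissibility of $D^{k+1}$ comes for free: both defining conditions of Definition \ref{def:irr} depend only on which entries are positive — the vanishing of the diagonal and the existence of positive-weight paths between any two indices — and this pattern coincides with that of the admissible matrix $D^0$ by the biconditional just established. I expect the only genuinely delicate point to be the bookkeeping that forces the $\argmax$ to select a positive entry. This is precisely why the full biconditional, rather than a one-sided implication, must be propagated through the induction: both the identification of the updated entry as already active and the application of the positivity condition on $\phi$ rely on $D^k$ having exactly the sparsity pattern of $D^0$, i.e.\ on $D^k$ being admissible.
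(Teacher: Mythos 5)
Your induction is correct and complete: the single-entry update at the off-diagonal position $(j_k,j_{k+1})$, the positivity of $\max_{\ell\neq j_k}D^k_{j_k,\ell}$ via condition (ii) of Definition \ref{def:irr} (with condition (i) ruling out the diagonal), and the positivity of the written value via condition (i) of Definition \ref{policies} are exactly the points that need checking. The paper itself omits the proof, stating only that it is elementary, and your argument is precisely the intended one, so there is nothing to compare beyond noting that you have supplied the omitted details correctly (modulo the implicit standing assumption $N\ge 2$, without which line 3 of the algorithm is vacuous anyway).
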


\section{Convergence analysis}

We first prove a general principle for projection algorithms
in Section \ref{sec:principle}.
Then we show in Section \ref{sec:satisfies} that Algorithm
\ref{alg:PAM} satisfies the assumptions of this statement.

\subsection{Recurrence implies convergence}\label{sec:principle}

We restate a slightly modified version of Corollaries 1 and 2 from \cite{Bregman}.

\begin{lemma}[projections reduce error] \label{Bregman:lemma}
Let $C_1,\ldots,C_N\subset\R^d$ be closed convex sets and
$z\in\cap_{j=1}^NC_j$, and let the sequences
$(j_k)_{k\in\N}\in\{1,\ldots,N\}^\N$ and $(x_k)_{k\in\N}\in(\R^d)^\N$ satisfy
\[x_{k+1}=\proj(x_k,C_{j_k})\quad\forall\,k\in\N.\]
Then we have
\begin{align}
&\|x_{k+1}-z\|^2\le\|x_k-z\|^2-\|x_{k+1}-x_k\|^2\quad\forall\,k\in\N,\label{ineq}\\
&\|x_{k+1}-z\|\le\|x_k-z\|\le\|x_0-z\|\quad\forall\,k\in\N,\label{bounded}
\end{align}
\end{lemma}


Now we show that every projection algorithm, which projects to
every set $C_j$ infinitely often, generates a sequence that converges
to a point in $C$.
Related results are known in the community working on firmly nonexpansive
operators, see Theorem 4.1 from \cite{Dao}.
We include an explicit statement of this fact and an elementary proof 
to keep the paper self-contained. 

\begin{proposition}[recurrence implies convergence]
\label{recurrence:convergence}
Let $C_1,\ldots,C_N\subset\R^d$ be closed convex sets with 
$\cap_{j=1}^NC_j\neq\emptyset$, and let $(j_k)_{k\in\N}\in\{1,\ldots,N\}^\N$ 
and $(x_k)_{k\in\N}\in(\R^d)^\N$ be sequences which satisfy
\[x_{k+1}=\proj(x_k,C_{j_k})\quad\forall\,k\in\N\]
as well as the recurrence condition
\begin{equation}\label{many:of:each}
\#\{k\in\N:j_k=j\}=\infty\quad\forall\,j\in\{1,\ldots,\N\}.
\end{equation}
Then there exists $x^*\in\cap_{j=1}^NC_j$ such that $\lim_{k\to\infty}x_k=x^*$.
\end{proposition}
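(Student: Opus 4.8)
The plan is to combine the Fejér-type monotonicity encoded in Lemma \ref{Bregman:lemma} with the recurrence condition \eqref{many:of:each} in three stages: first extract quantitative consequences of the two inequalities, then show that at least one accumulation point of $(x_k)$ lies in $C:=\cap_{j=1}^NC_j$, and finally upgrade this to convergence of the whole sequence.

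First I would fix an arbitrary $z\in C$, which exists by hypothesis. By \eqref{bounded} the sequence $(\|x_k-z\|)_k$ is nonincreasing and bounded below by $0$, hence convergent to some $L\ge 0$; in particular $(x_k)$ is bounded. Summing \eqref{ineq} telescopically from $0$ to $K-1$ gives $\sum_{k=0}^{K-1}\|x_{k+1}-x_k\|^2\le\|x_0-z\|^2-\|x_K-z\|^2\le\|x_0-z\|^2$, so the series $\sum_k\|x_{k+1}-x_k\|^2$ converges and therefore $\|x_{k+1}-x_k\|\to 0$. Two facts will be used repeatedly: for any fixed window length $w$, a block of $w$ consecutive iterates has diameter tending to $0$ as the block index grows (the diameter is bounded by $(w-1)$ times the largest step in the block); and for every $j$, the identity $\dist(x_k,C_j)=\|x_{k+1}-x_k\|$, valid whenever $j_k=j$, shows via \eqref{many:of:each} that $\liminf_k\dist(x_k,C_j)=0$.

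The core step is to produce an accumulation point $x^*$ of $(x_k)$ with $x^*\in C$. The idea is to locate time windows $[m_n,m_n+w_n)$ in which every index $1,\dots,N$ occurs and whose oscillation tends to $0$: at any such window the iterate is simultaneously within vanishing distance of every $C_j$, so any limit of $(x_{m_n})$ lies in $C$. By \eqref{many:of:each} windows containing all indices exist; the delicate point is to control their oscillation. If the minimal length $w(k)$ of a window starting at $k$ and containing all indices satisfies $\liminf_k w(k)<\infty$, one selects windows of bounded length, whose oscillation vanishes by the block-diameter fact, and the construction succeeds. \textbf{The main obstacle is the complementary case} $w(k)\to\infty$, in which some indices become arbitrarily rare: then a window seeing all sets has unbounded length, and since $\sum_k\|x_{k+1}-x_k\|$ may diverge, its path length (hence oscillation) need not vanish, so accumulation points could a priori fail to lie in $C$. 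I would resolve this by exploiting that during a long stretch using only a strict subset of the sets the iterates are driven toward the intersection of that subset, so that when a rare index is finally invoked the iterate is already near the corresponding set; this can be organized as an induction on $N$, or alternatively by using that the accumulation set of $(x_k)$ is compact and connected (a consequence of $\|x_{k+1}-x_k\|\to 0$) together with the fact that it meets every $C_j$.

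Finally I would upgrade a single accumulation point $x^*\in C$ to full convergence by a standard Fejér argument. Since $x^*\in C$, inequality \eqref{bounded} with $z=x^*$ shows that $(\|x_k-x^*\|)_k$ is nonincreasing and convergent; as it has a subsequence tending to $0$, the whole sequence satisfies $\|x_k-x^*\|\to 0$, i.e.\ $x_k\to x^*$. Equivalently, uniqueness of the accumulation point follows by expanding $\|x_k-x^*\|^2-\|x_k-y^*\|^2$ for two accumulation points $x^*,y^*\in C$ and passing to the respective subsequences, which forces $x^*=y^*$. I expect this last step to be routine; essentially all the difficulty sits in the accumulation-point argument and, within it, in the unbounded-gap case flagged above.
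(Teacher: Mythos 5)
Your stage 1 (telescoping \eqref{ineq} to get $\sum_k\|x_{k+1}-x_k\|^2<\infty$, hence vanishing steps and boundedness) and your final stage (upgrading one accumulation point $x^*\in\cap_jC_j$ to full convergence via \eqref{bounded} with $z=x^*$) are correct and coincide with the paper's proof. The gap sits exactly where you flag it, and neither of your two proposed patches closes it. The claim that during a long stretch using only a strict subset of the sets the iterates are ``driven toward the intersection of that subset'' is unsubstantiated over a finite stretch, and it aims at the wrong target anyway: nearness of $x_k$ to the rare set $C_j$ at the moment $j$ is invoked is automatic, since then $\dist(x_k,C_j)=\|x_{k+1}-x_k\|\to 0$; the difficulty is obtaining a \emph{single} point simultaneously close to all the sets, which this observation does not address. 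Your fallback is also insufficient: the accumulation set is indeed compact, connected, and meets every $C_j$, but a connected compactum meeting each of finitely many closed convex sets need not contain a point of their intersection (think of a segment with one endpoint in $C_1\setminus C_2$ and the other in $C_2\setminus C_1$). Since a bounded sequence with vanishing steps visiting each set infinitely often need not accumulate in the intersection without further structure, any argument that discards the Fej\'er property at this stage cannot succeed.

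The paper's proof supplies precisely the missing trapping device, and it is worth internalizing: fix a limit $x^*$ of a subsequence $(x_{k_\ell})$, observe that $x^*$ lies in at least one $C_j$, and set $J^*:=\{j:x^*\in C_j\}$, $J_*:=\{1,\ldots,N\}\setminus J^*$. If $J_*\neq\emptyset$, choose $\eps>0$ with $\dist(x^*,C_j)\ge 2\eps$ for all $j\in J_*$. Each time the trajectory comes $\eps$-close to $x^*$ (which happens infinitely often along the subsequence), apply \eqref{bounded} with $z=x^*$ \emph{only to the ensuing block of iterations that project onto sets in $J^*$} — this is legitimate because $x^*$ lies in those sets — so the trajectory remains trapped in the $\eps$-ball around $x^*$ until the first projection onto some $C_j$ with $j\in J_*$, which by \eqref{many:of:each} must eventually occur. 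That projection lands in a set at distance $\ge 2\eps$ from $x^*$, so by the triangle inequality it is a step of length $\ge\eps$; since this happens for infinitely many disjoint blocks, it contradicts the square-summability of the step lengths from \eqref{ineq}. Hence $J_*=\emptyset$ and $x^*\in\cap_jC_j$. Note the key structural move you were missing: the Fej\'er inequality is applied with the candidate accumulation point $x^*$ itself as the anchor $z$, restricted to the sets actually containing it, rather than with a point of the full intersection — this is what converts ``recurrence plus vanishing steps'' into simultaneity at one point, with no need to control window oscillation or induct on $N$.
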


\begin{proof}
Because of statement \eqref{bounded} of Lemma \ref{Bregman:lemma}, 
there exist a subsequence $(x_{k_\ell})_{\ell\in\N}$ of $(x_k)_{k\in\N}$ 
and $x^*\in\R^d$ such that
\begin{equation}\label{convergent:ss}
\lim_{\ell\to\infty}\|x_{k_\ell}-x^*\|=0.
\end{equation}
Clearly, there exist $j^*\in\{1,\ldots,N\}$ and a subsequence $(k_{\ell_m})_{m\in\N}$ of the sequence $(k_\ell)_{\ell\in\N}$ with 
\[j_{k_{\ell_m}}=j^*\quad\forall\,m\in\N.\]
Since $C_{j^*}$ is closed, we have $x^*\in C_{j^*}$.
We partition $\{1,\ldots,N\}$ into 
\[J^*:=\big\{j\in\{1,\ldots,N\}: x^*\in C_j\big\},\quad 
J_*=\{1,\ldots,N\}\setminus J^*.\]
By the above, we have $J^*\neq\emptyset$.
Assume that $J_*\neq\emptyset$. 
By induction, using statement \eqref{many:of:each}, we can construct 
sequences $(k'_\ell)_{\ell\in\N}\in\N^\N$ and 
$(k''_\ell)_{\ell\in\N}\in\N^\N$ given by $k'_0:=k_{\ell_0}$ and
the iteration
\begin{align*}
&k''_{\ell}:=\min\{k\in\N:\ k>k'_{\ell},\ j_k\in J_*\},\\
&k'_{\ell+1}:=\min\{k_{\ell_m}:m\in\N,\ k_{\ell_m}>k''_\ell\}
\end{align*}
for $\ell\in\N$.
In particular, we have
\[k_0'<k_0''<k_1'<k_1''<\ldots\]
Since $J_*$ is finite, there exists $\eps>0$ such that 
\[\dist(x^*,C_j)\ge2\eps\quad\forall\,j\in J_*.\]
By construction of the sequence $(k_\ell')_{\ell\in\N}$, there exists 
$\ell^*\in\N$ such that
\[\|x_{k_\ell'}-x^*\|\le\eps\quad\forall\,\ell\ge\ell^*.\]
Applying statement \eqref{bounded} of Lemma \ref{Bregman:lemma} 
with $z=x^*$ and the system of sets $\{C_j:j\in J^*\}$, and using the construction 
of the sequence $(k_\ell'')_{\ell\in\N}$, we obtain
\[\|x_k-x^*\|\le\|x_{k_\ell'}-x^*\|\le\eps\quad
\forall\,\ell\ge\ell^*,\ \forall\,k\in[k_\ell',k_\ell'').\]
On the other hand, we have $\|x_{k_\ell''}-x^*\|\ge 2\eps$
for all $\ell\in\N$, so
\begin{equation}\label{small:step}
\|x_{k_\ell''}-x_{k_\ell''-1}\|
\ge\|x_{k_\ell''}-x^*\|-\|x^*-x_{k_\ell''-1}\|
\ge\eps\quad\forall\,\ell\ge\ell^*.
\end{equation}
Now let $z\in\cap_{j=1}^NC_j$ and use statement \eqref{small:step} and
statement \eqref{ineq} from Lemma \ref{Bregman:lemma} multiple times to obtain
\[\lim_{k\to\infty}\|x_k-z\|^2
\le\|x_0-z\|^2-\lim_{k\to\infty}\sum_{j=0}^{k-1}\|x_{j+1}-x_j\|^2
=-\infty,\]
which is a contradiction.
Hence $J_*=\emptyset$, and $x^*\in\cap_{j=1}^NC_j$.
Now statements \eqref{convergent:ss} and statement \eqref{bounded} 
of Lemma \ref{Bregman:lemma} with $z=x^*$ imply 
$\lim_{k\to\infty}x_k=x^*$, as desired.
\end{proof}

\subsection{Convergence of PAM} \label{sec:satisfies}

We check that Algorithm \ref{alg:PAM} satisfies the assumptions
of Proposition \ref{recurrence:convergence}, whenever the matrix $D^0$
and the policy $\phi$ are admissible.

\begin{proposition}[PAM is recurrent] \label{all:numbers:to:zero}
Let $C_1,\ldots,C_N\subset\R^d$ be closed convex sets which satisfy
$\cap_{j=1}^NC_j\neq\emptyset$, and let the matrix 
$D^0\in\R^{N\times N}_{\ge 0}$ and the policy 
$\phi:\{1,\ldots,N\}\times\R^{N\times N}_{\ge 0}\to\R_{\ge 0}$
be admissible.
Then for any initial point $x_0\in C_1$, the sequences 
$(j_k)_{k\in\N}\in\{1,\ldots,N\}^\N$ and 
$(D^k)_{k\in\N}\in(\R_{\ge 0}^{N\times N})^\N$  
generated by Algorithm \ref{alg:PAM} satisfy
\begin{align}
&\lim_{k\to\infty}\max_{m,n\in\{1,\ldots,N\}}D_{m,n}^k=0,\label{DtoZ}\\
&\#\{k\in\N:j_k=m\}=\infty\quad\forall\,m\in\{1,\ldots,\N\}.\label{rec}
\end{align}
\end{proposition}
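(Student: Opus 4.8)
The plan is to first extract from Lemma \ref{Bregman:lemma} the one analytic fact we need, namely that the projection steps vanish, and then to run a combinatorial argument on the distance matrix that exploits the $\argmax$-rule together with the contraction built into admissible policies. First I would fix any $z\in\cap_{j=1}^N C_j$ (which exists by hypothesis) and sum inequality \eqref{ineq}, obtaining $\sum_{k=0}^\infty\|x_{k+1}-x_k\|^2\le\|x_0-z\|^2<\infty$, so that $\|x_{k+1}-x_k\|\to 0$. The same telescoping bounds every step uniformly, which together with the fixed input $D^0$ shows that all entries $D^k_{m,n}$ stay below a constant $B$.

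Next I would introduce $S:=\{m:\#\{k:j_k=m\}=\infty\}$, the set of indices visited infinitely often. Since the alphabet is finite and the run infinite, $S\neq\emptyset$, and there is $K_0$ with $j_k\in S$ for all $k\ge K_0$. Goal \eqref{rec} is precisely the assertion $S=\{1,\ldots,N\}$.

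The technical heart is the claim that for every $m\in S$ the row maximum $R^k_m:=\max_n D^k_{m,n}$ tends to $0$. Here I would use that row $m$ is altered only at the visit times $k_1<k_2<\cdots$ of $m$ (only row $j_k$ changes at step $k$), and that at a visit the altered entry is exactly the current row maximum: indeed $D^k_{m,m}=0$ by Definition \ref{def:irr}(i) and the row carries a positive off-diagonal entry by Lemma \ref{sparsity}, so the $\argmax$ over $\ell\neq m$ realizes $R^{k_i}_m$. The rewritten value is $\max\{\|x_{k_i+1}-x_{k_i}\|,\phi(m,D^{k_i})\}\le\max\{\|x_{k_i+1}-x_{k_i}\|,\beta R^{k_i}_m\}$ by Definition \ref{policies}(ii). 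I would then argue that $(R^{k_i}_m)_i$ is eventually monotone: the only way a visit can raise the row maximum is via a step exceeding it, and since steps vanish this is controlled by passing to $\max\{R^{k_i}_m,\sup_{k\ge k_i}\|x_{k+1}-x_k\|\}$, which is non-increasing; hence $R^{k_i}_m\to R_\infty\ge 0$. Assuming $R_\infty>0$ yields a contradiction: for large $i$ both the step and $\beta R^{k_i}_m$ lie strictly below $R_\infty$, so each visit rewrites an entry of value $\ge R_\infty$ into one below $R_\infty$, strictly decreasing the number of entries of row $m$ that are $\ge R_\infty$; this count is a nonnegative integer forced to stay $\ge 1$, which is impossible. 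Thus $R_\infty=0$.

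Finally I would close the loop. Suppose $S\neq\{1,\ldots,N\}$. By the connectivity requirement in Definition \ref{def:irr}(ii), any directed path from $S$ to its complement must cross the boundary, giving indices $m\in S$ and $n\notin S$ with $D^0_{m,n}>0$, hence $D^k_{m,n}>0$ for all $k$ by Lemma \ref{sparsity}. Since $n$ is visited only finitely often, the transition $m\to n$ occurs only finitely often, so $D^k_{m,n}$ is eventually constant and positive; but $m\in S$ forces $D^k_{m,n}\le R^k_m\to 0$, a contradiction. Hence $S=\{1,\ldots,N\}$, which is \eqref{rec}, and then \eqref{DtoZ} is immediate because $\max_{m,n}D^k_{m,n}=\max_m R^k_m$ is a maximum of finitely many sequences each tending to $0$. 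I expect the eventual-monotonicity bookkeeping for $(R^{k_i}_m)_i$ and the finite-descent-of-large-entries contradiction to be the delicate points; the connectivity step and the reduction to vanishing projection steps are routine given the earlier results.
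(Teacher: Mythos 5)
Your proof is correct and follows essentially the same strategy as the paper's: telescoping \eqref{ineq} to get vanishing step lengths, showing that the row maxima of infinitely-visited rows tend to zero by counting the entries above a threshold (each visit rewrites precisely the argmax entry, whose new value is at most $\max\{\|x_{k+1}-x_k\|,\beta\max_n D_{m,n}\}$, and only one entry changes per visit), and then propagating recurrence through the sparsity pattern of $D^0$ via Lemma \ref{sparsity} and Definition \ref{def:irr}(ii). The only difference is bookkeeping: where the paper runs a quantitative induction ($\nu(p)\le N-p$, yielding decay by a factor $\beta$ up to $\eps$ after $N$ visits), you use the non-increasing envelope $\max\bigl\{R^{k_i}_m,\sup_{k\ge k_i}\|x_{k+1}-x_k\|\bigr\}$ and a finite-descent contradiction, which is an equivalent argument.
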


\begin{proof}
Let $z\in\cap_{j=1}^NC_j$.
Applying inequality \eqref{ineq} from Lemma \ref{Bregman:lemma} 
multiple times yields
\[0\le\|x_k-z\|^2\le\|x_0-z\|^2-\sum_{j=0}^{k-1}\|x_{j+1}-x_j\|^2\quad\forall\,k\in\N,\]
which forces
\begin{equation}\label{to:zero}
\lim_{k\to\infty}\|x_{k+1}-x_k\|=0.
\end{equation}
Let us denote 
\[J_\infty:=\{m\in\{1,\ldots,N\}:\#\{k\in\N:j_k=m\}=\infty\}.\]
Obviously, we have $J_\infty\neq\emptyset$.
Let $m\in J_\infty$, and let $(k_\ell)_{\ell\in\N}\in\N^{\N}$ be the
maximal strictly increasing sequence with $j_{k_\ell}=m$ for all 
$\ell\in\N$. 
Let $\eps>0$. 
By statement \eqref{to:zero}, there exists $k^*\in\N$ such that 
\[\|x_{k+1}-x_k\|\le\eps\quad\text{for all}\quad k\ge k^*.\]
Because of line 6 of Algorithm \ref{alg:PAM} and since $\phi$ is admissible
with a decay rate $\beta\in(0,1)$, we have
\begin{equation}\label{decline}
\max_nD^{k'}_{m,n}\le\max\{\eps,\max_nD^k_{m,n}\}\quad\text{whenever}\quad
k^*\le k\le k'.
\end{equation}
Now let $\ell\in\N$ be such that $k_\ell\ge k^*$.
We wish to show that
\begin{equation}\label{N:down}
\max_nD^{k_{\ell+N}}_{m,n}
\le\max\{\eps,\beta\max_nD^{k_\ell}_{m,n}\}.
\end{equation}
To this end, we introduce the quantity
\[\nu(p):=\#\{n:D^{k_{\ell+p}}_{m,n}
>\max\{\eps,\beta\max_{n'}D^{k_\ell}_{m,n'}\}\]
and prove the statement
\begin{equation}\label{istat}
\nu(p)\le N-p\quad\text{for}\quad p\in\{0,\ldots,N\}
\end{equation}
by induction.
Statement \eqref{istat} is trivial for $p=0$.
Assume that statement \eqref{istat} holds for some $p\in\{0,\ldots,N-1\}$.
If $\nu(p)=0$, then statement \eqref{decline} implies that $\nu(p+1)=0$,
and that the induction hypothesis \eqref{istat} holds for $p+1$.
If $\nu(p)>0$, then line 3 of Algorithm \ref{alg:PAM} selects an index
\[j_{k_{\ell+p}+1}
\in\argmax_{q\in\{1,\ldots,N\}\setminus\{m\}}D^{k_{\ell+p}}_{m,q}\]
that satisfies
\[D^{k_{\ell+p}}_{m,j_{k_{\ell+p}+1}}>\max\{\eps,\beta\max_{n'}D^{k_\ell}_{m,n'}\}.\]
By line 6 of Algorithm \ref{alg:PAM} and by statemenr \eqref{decline}, 
we have
\begin{align*}
D^{k_{\ell+p}+1}_{m,j_{k_{\ell+p}+1}}
&=\max\{\|x_{k_{\ell+p}+1}-x_{k_{\ell+p}}\|,\phi(m,D^{k_{\ell+p}})\}\\
&\le\max\{\eps,\beta\max_nD^{k_{\ell+p}}_{m,n}\}
\le\max\{\eps,\beta\max_nD^{k_\ell}_{m,n}\}.
\end{align*}
By construction of the sequence $(k_\ell)_{\ell\in\N}$,
it follows that
\[D^{k_{\ell+p+1}}_{m,n}=D^{k_{\ell+p}+1}_{m,n}\quad\forall\,n\in\{1,\ldots,N\},\]
so $\nu(p+1)=\nu(p)-1$, and statement \eqref{istat} holds for $p+1$.
This completes the induction, and statement \eqref{N:down} is verified,
because $\nu(N)=0$.
Since $\beta<1$, statements \eqref{decline} and \eqref{N:down} imply that
there exists $k^{**}\in\N$ such that $\max_nD^k_{m,n}\le\eps$ for all
$k\ge k^{**}$.
Since $m\in J_\infty$ and $\eps>0$ were arbitrary, we have shown that
\begin{equation}\label{JtoZ}
\lim_{k\to\infty}\max_nD^k_{m,n}\to 0\quad\forall\,m\in J_\infty.
\end{equation}
In view of Lemma \ref{sparsity}, this implies
\begin{equation}\label{ninJ}
n\in J_\infty\quad \text{whenever}\quad m\in J_\infty\ \text{and}\ 
D^0_{m,n}>0.
\end{equation}
Since $D^0$ satisfies part ii) of Definition \ref{def:irr}, a simple
recursion on statement \eqref{ninJ} yields $J_\infty=\{1,\ldots,N\}$,
which is statement \eqref{rec}.
Consequently, statement \eqref{JtoZ} implies \eqref{DtoZ}.
\end{proof}

Now we summarize the above in the main theoretical result of this paper.

\begin{theorem}[convergence of PAM]\label{convergence:theorem}
Let $C_1,\ldots,C_N\subset\R^d$ be closed convex sets which satisfy
$\cap_{j=1}^NC_j\neq\emptyset$, and let the matrix 
$D^0\in\R^{N\times N}_{\ge 0}$ and the policy 
$\phi:\{1,\ldots,N\}\times\R^{N\times N}_{\ge 0}\to\R_{\ge 0}$
be admissible.
Then there exists $x^*\in\cap_{j=1}^NC_j$
such that the sequence $(x_k)_{k\in\N}\in(\R^d)^\N$ generated by Algorithm
\ref{alg:PAM} satisfies 
\[\lim_{k\to\infty}x_k=x^*.\]
\end{theorem}

\begin{proof}
Proposition \ref{all:numbers:to:zero} verifies that Algorithm \ref{alg:PAM}
satisfies the assumptions of Proposition \ref{recurrence:convergence}, 
so PAM is indeed convergent.
\end{proof}

\section{An instructive toy example}

We explore the performance of PAM with different matrices $D^0$
and policies $\phi$ in a very simple toy example, and compare 
its behavior with MCP and MRP.
We are fully aware that this example has many unrealistic features,
but it allows us to illustrate key features of our algorithm 
in a nice graphic way.
To keep things simple, we measure the computational cost of all
three algorithms in the number of iterations, which is the number
of projection steps carried out.

\medskip

Throughout this section, we consider the one-dimensional subspaces
\[C_j:=\{s\begin{pmatrix}r\cos(\tfrac{j\pi}{N})\\
r\sin(\tfrac{j\pi}{N})\\1\end{pmatrix}:s\in\R\},\quad
j=1,\ldots,N,\]
with $\cap_{j=1}^NC_j=\{0\}$, and the initial point 
$x_0=(\cos(\frac{\pi}{N}),\sin(\frac{\pi}{N}),1)$.
For aesthetical reasons, we choose $N=9$ and $r=0.05$ in most illustrations.

\begin{figure}[ht] 
\centering\includegraphics{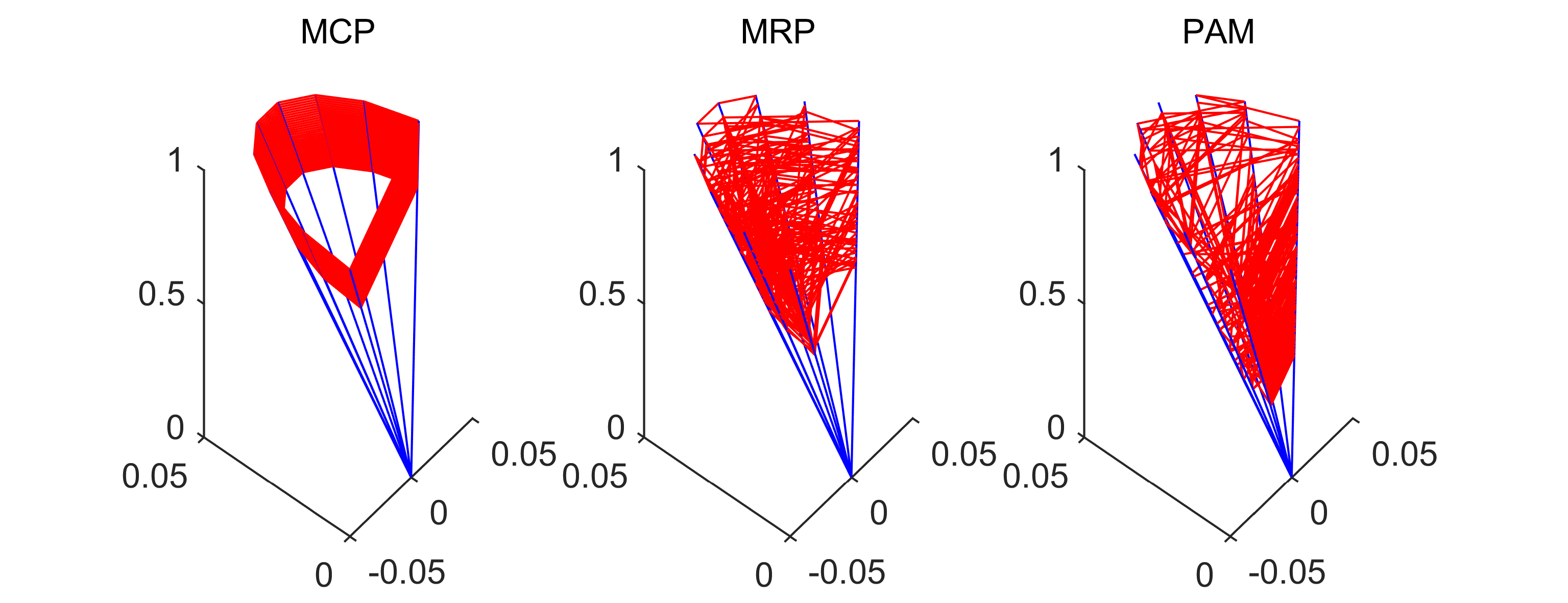}

\vspace{1ex}

\centering\includegraphics{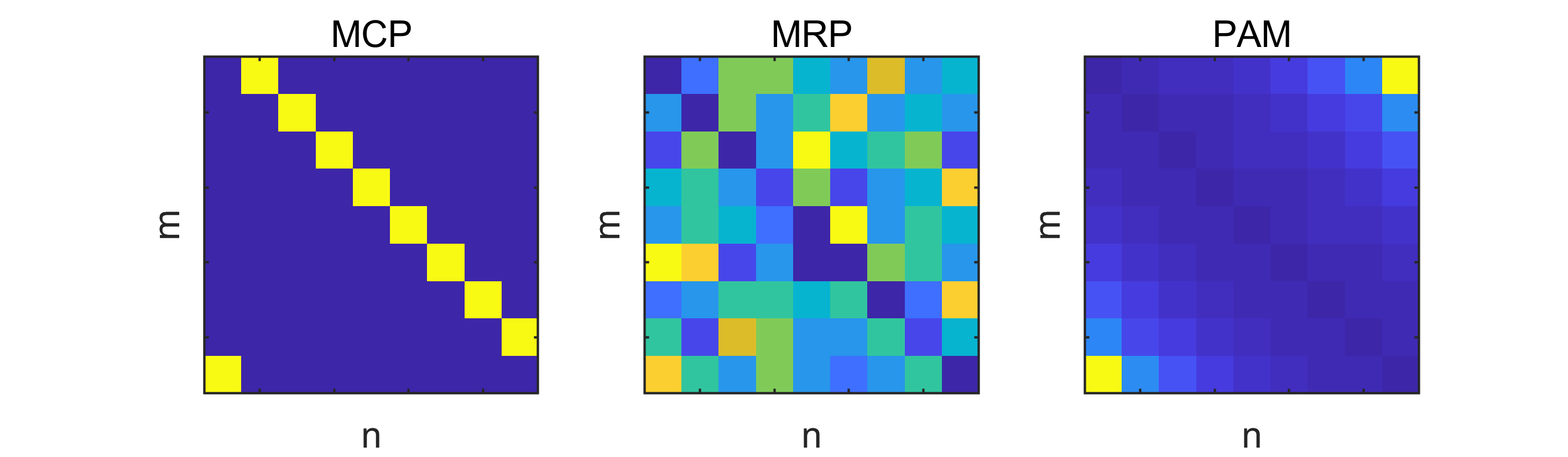}

\vspace{-1.5ex}

\caption{Methods MCP, MRP and PAM applied to toy problem.
Top row: Iterates red, subspaces blue. 
Bottom row: Frequencies (yellow=high, blue=low) of transitions from 
set $C_m$ to set $C_n$.
\label{benchmark}}
\end{figure} 

\medskip

Let us first compare MCP, MRP and PAM without going into too much technical
detail.

\begin{example}[benchmark versus PAM]\label{ex:benchmark}
In Figure \ref{benchmark},
we see at a glance how the strategies behind MCP, MRP and PAM impact
their behavior and performance when applied to the toy model.
The fixed order of projections in MCP can result in significant
underperformance, while the random order of the projections in MRP
guarantees that the average of the achievable progress is realized.

This motivates us to try and outperform the average by assigning
a high probability to transitions, which performed better than
average in previous iterations, in the new method PAM.
The toy problem suggests that this is not a bad idea, when 
the matrix $D^0$ and the policy $\phi$ are chosen well for the problem 
at hand.
For this showcase, we used $D^0$ and $\phi$ as in Example \ref{scaling}a),
and carried out 315 iterations with each method.
\end{example}

\begin{figure}[p] 
\centering\includegraphics{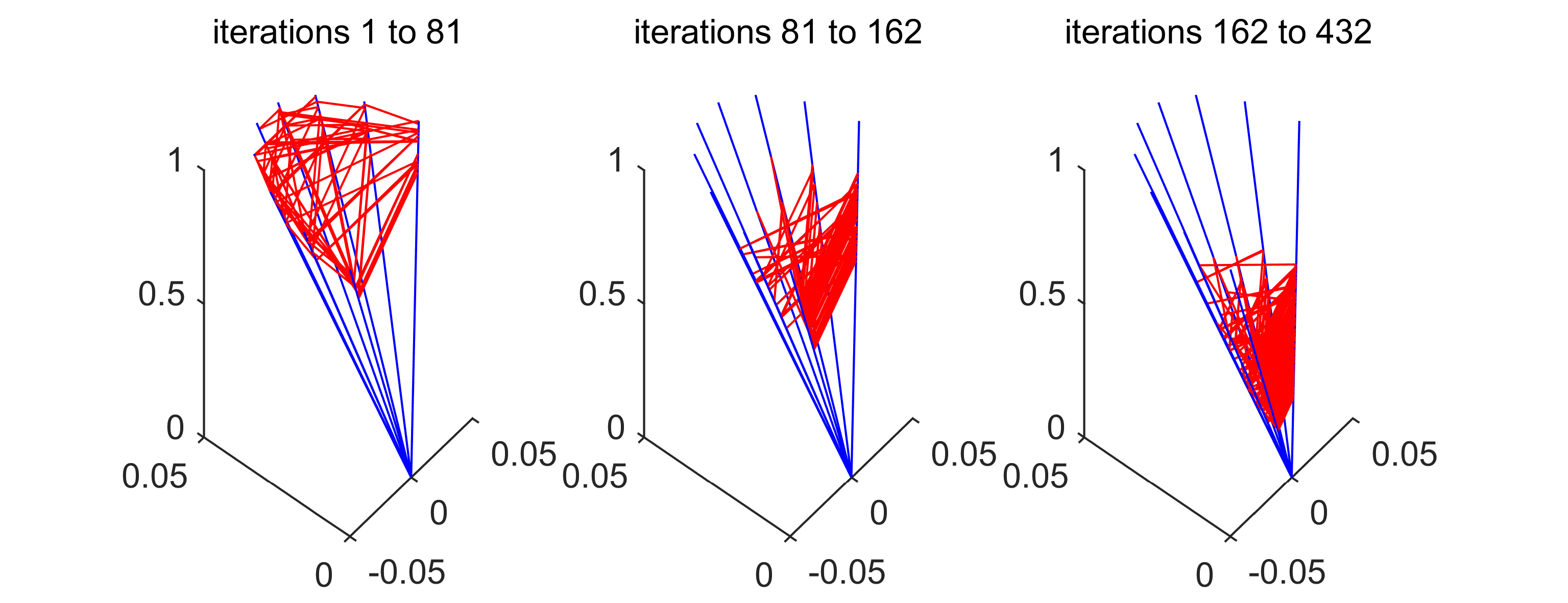}

\centering\includegraphics{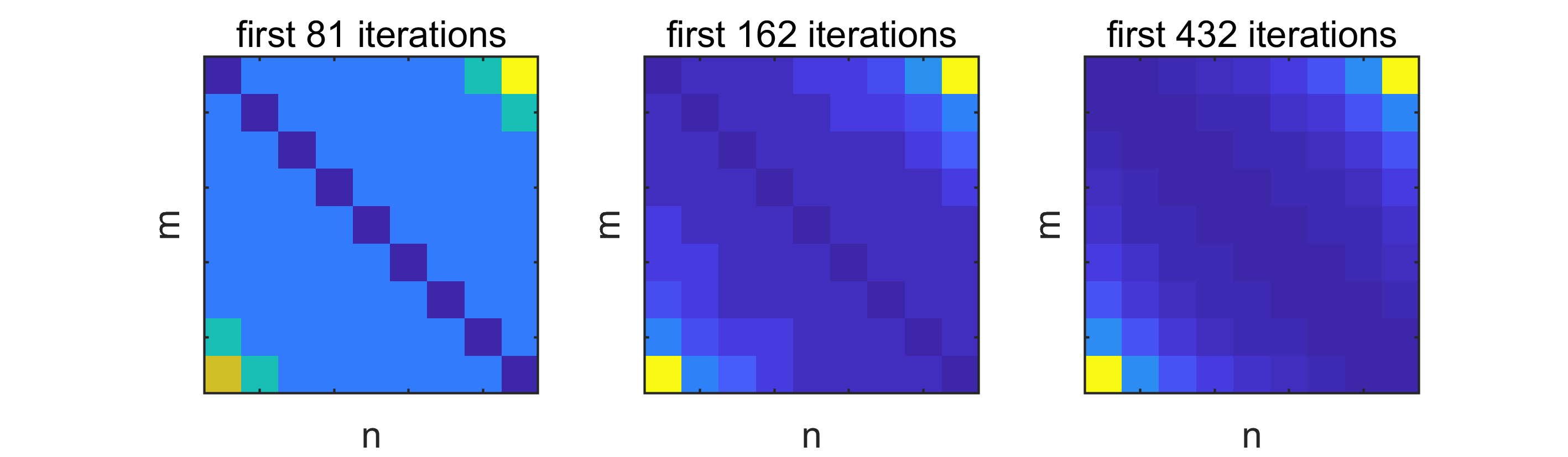}

\vspace{-1.5ex}

\caption{Trajectories and frequencies of PAM as in Example \ref{scaling}(i).
\label{Inf}}
\end{figure} 

\begin{figure}[p] 
\centering\includegraphics{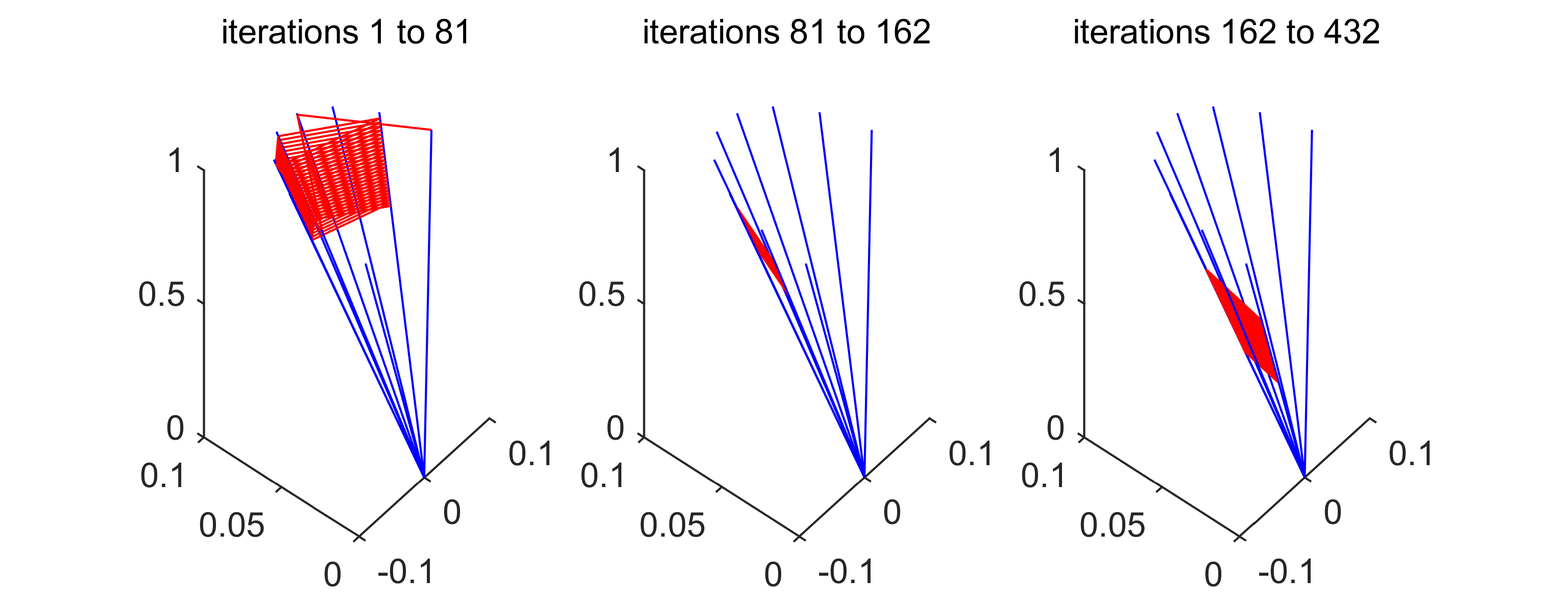}

\centering\includegraphics{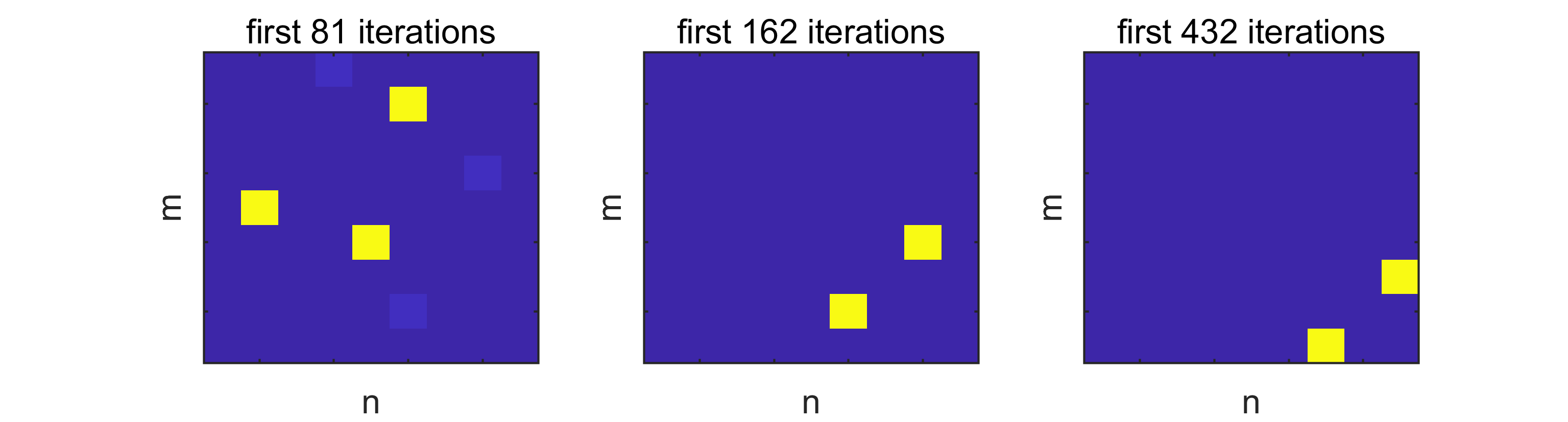}

\vspace{-1.5ex}

\caption{Trajectories and frequencies of PAM as in Example \ref{scaling}(ii).
\label{unf}}
\end{figure} 

In Example \ref{scaling}, we examine how a good performance of PAM 
can be achieved by a proper scaling of the initial matrix $D^0$.
Please note that the intention of this example is not to discuss
the size of numerical errors, but rather the qualitative behavior 
of PAM.
The matrices and the iteration numbers are chosen in such a way that 
these characteristics become clearly visible.

\begin{example}[scaling $D^0$]\label{scaling}
i) In Figure \ref{Inf},
we apply PAM with $\phi_{\min}$, $\beta=0.01$ and initial matrix 
$D^0\in\R^{N\times N}_{\ge 0}$ given by
\[D^0_{m,n}=\begin{cases}1,&m\neq n,\\0,&\text{else.}\end{cases}\]
While the entries of the matrix $D^k$ are large compared to the actual
step-sizes of the algorithm, we see a more or less uniform sampling
of the transitions, similar to the behavior of the superior benchmark
method MRP.
Once the sizes of the entries of the matrix $D^k$ are similar to the 
sizes of the steps carried out, PAM has learned the geometry of the problem
and focusses with high probability on profitable transitions, which
allows it to outperform MRP.

\medskip

ii) In Figure \ref{unf},
we apply PAM with $\phi_{\min}$, $\beta=0.01$ and initial matrix 
$D^0\in\R^{N\times N}_{\ge 0}$ given by
\[D^0_{m,n}=\begin{cases}0.01,&m\neq n,\\0,&\text{else,}\end{cases}\]
so the entries of the matrices $D^k$ underestimate the actual
step-sizes in the initial phase of the algorithm. 
This leads to unpredictable qualitative behaviour of PAM and incomplete
exploration of the admissible transitions, and in many 
cases to an underperformance relative to MRP.
Once the sizes of the entries of the matrix $D^k$ are similar to the 
sizes of the steps carried out, the qualitative behavior will be 
as in part i) above.
\end{example}

\begin{figure}[t] 
\centering\includegraphics{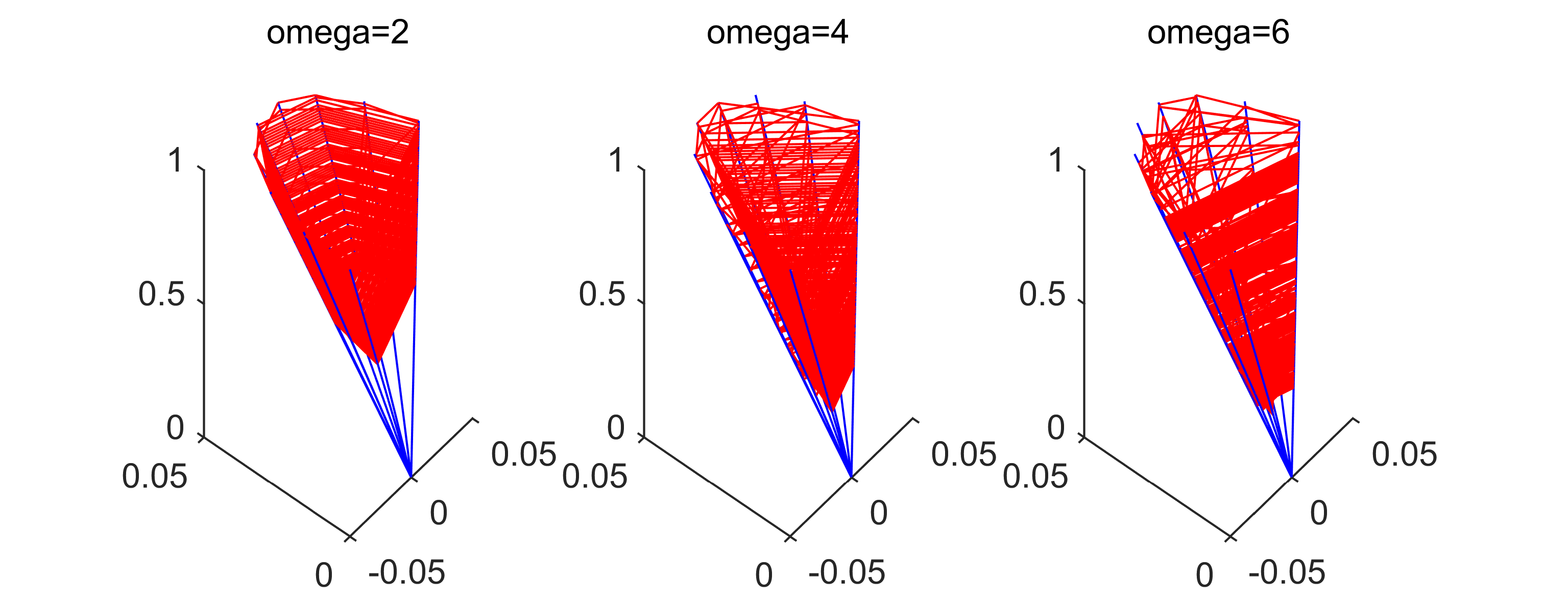}

\centering\includegraphics{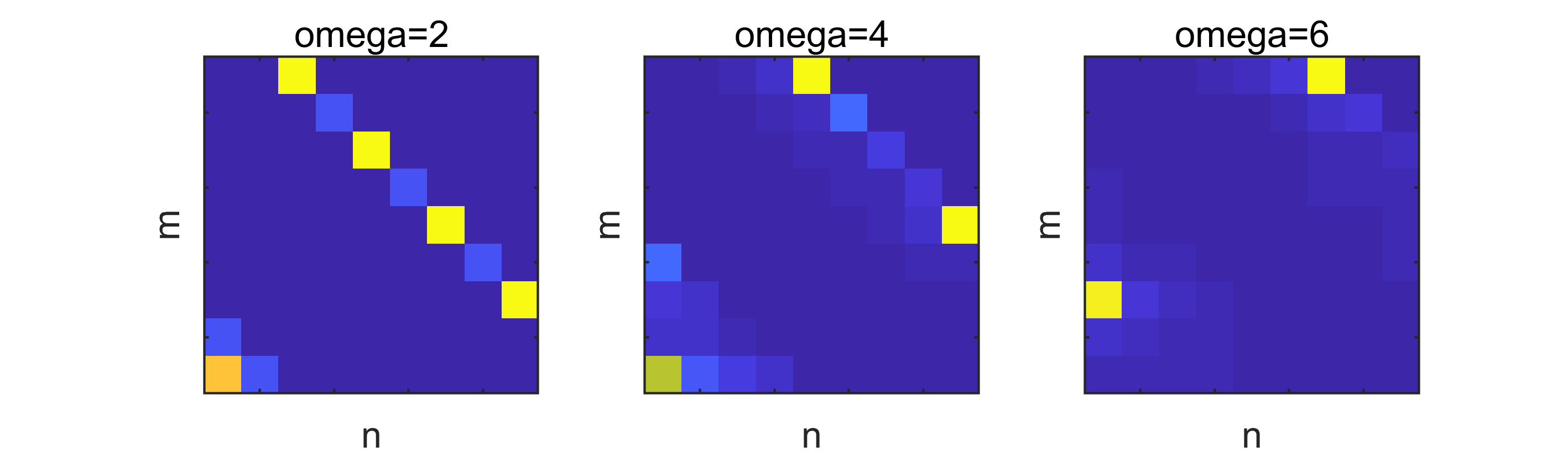}

\vspace{-1.5ex}

\caption{Trajectories and frequencies of PAM as in Example \ref{sparseD0}.
\label{omega}}
\end{figure} 

The effect of choosing a sparse $D^0$ is not surprising. 

\begin{example}[sparse $D^0$]\label{sparseD0}
We apply PAM to the model problem with $\phi_{\min}$, $\beta=0.01$ and the matrix $D^\rightarrow$ from Example \ref{some:matrices} with parameters
$\omega=2,4,6$, and obtain the results shown in Figure \ref{omega} 
after 432 iterations.
Note that the matrix $D^\rightarrow$ overestimates the first step-lengths
of the algorithm and therefore needs no scaling. 

The algorithm behaves exactly as expected:
After an initial learning phase, PAM focusses on the most profitable
admissible transitions.
A small bandwidth $\omega$ results in a shorter initial learning
phase, but small gain in long-term performance as compared to MCP.
On the other hand, a large $\omega$ results in a longer learning phase
with a seizable long-term gain in performance.
\end{example}

Our toy model is not sophisticated enough to reveal a significant
difference between the behavior induced by different policies $\phi$.
We can, however, observe how the choice of the bandwidth of the matrix 
$D^\rightarrow$ from Example \ref{some:matrices} impacts the performance
of the method in this particular example.

\begin{figure}[t] 
\centering\includegraphics{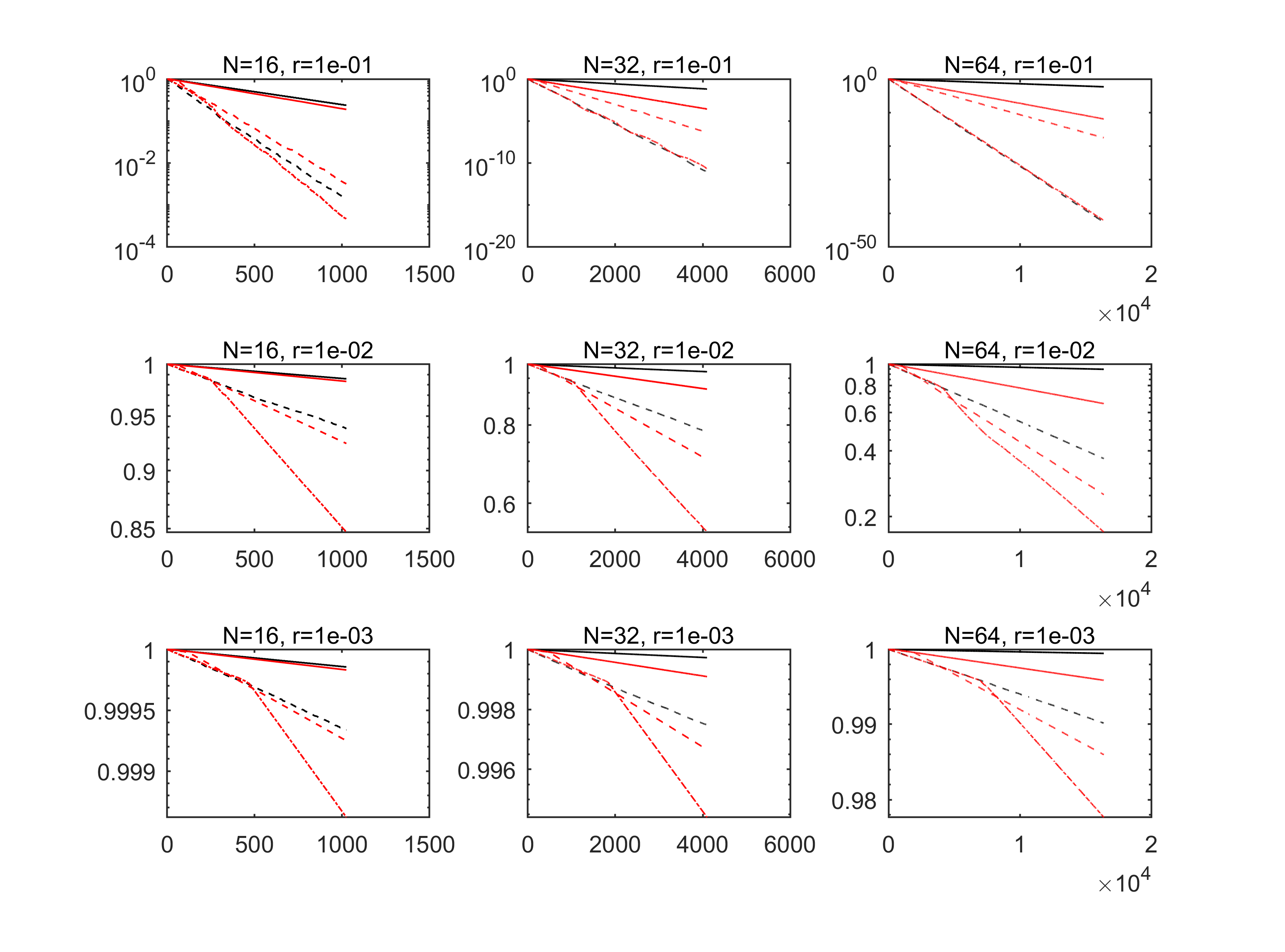}

\vspace{-1.5ex}

\caption{Error plots of methods applied to toy model with varying
parameters. 
Solid black line MCP, dashed black line MRP,
solid red line PAM $\omega=N/4$, dashed red line PAM $\omega=N/2$,
dash-dotted red line PAM $\omega=N$.
More details given in Example \ref{bandwidth:err}.
\label{errorplots}}
\end{figure} 

\begin{example}[first quantitative tests in toy example]
\label{bandwidth:err}
We apply MCP, MRP and PAM with initial matrix $D^\rightarrow$ from 
Example \ref{some:matrices} and three different choices of the 
bandwidth $\omega$ to our toy problem.
There are a few interesting features of the results displayed in 
Figure \ref{errorplots} we wish to summarize:
\begin{itemize}
\item [i)] The initial learning phase in which PAM explores the geometry 
of the problem is clearly visible in the error plot.
\item [ii)] When $\omega=N$, i.e.\ when every transition from set $C_m$
to set $C_n$ with $m\neq n$ is admissible, PAM never performed worse 
than MRP.
\item [iii)] The harder the problem is to solve for MCP and MRP 
(in this example this is the case when $r>0$ is small), 
the more clearly PAM (with large $\omega$) outperforms both methods.
\end{itemize}
\end{example}

\section{Conclusion}

This paper introduces the idea of learning to the realm of
algorithms for feasibility problems.
The focus is on establishing a first feasible algorithm 
and proving its convergence for a range of admissible 
learning strategies.
Since it was a major effort and achievement to quantify the speed of
convergence for MCP and MRP in the setting of affine subspaces, it seems
impossible to achieve something similar for PAM, which is, in a sense, 
path-dependent.
For this reason, we believe that we completed the theoretical
analysis of PAM in the present paper.

First experiments with PAM applied to computerized and 
seismic tomography data reveal that the performance of PAM varies
between different types of problems.
The choice of the matrix $D^0$ and the strategy $\phi$ really seems 
to matter in a real-world context, which calls for a detailed
computational investigation of the performance of PAM.
As the numerical handling of these problems is a challenge in itself,
and unrelated to the key issue of the present paper, we postpone a detailed
exploration of this issue to future work.

\subsection*{Acknowledgement}

The author thanks Matthew Tam for an introduction to the world 
of projection methods and support during the preparation of this paper.

\bibliographystyle{plain}
\bibliography{accelerated}

\begin{thebibliography}{10}

\bibitem{Li}
F.~Arroyo, E.~Arroyo, X.~Li, and J.~Zhu.
\newblock The convergence of the block cyclic projection with an overrelaxation
  parameter for compressed sensing based tomography.
\newblock {\em J. Comput. Appl. Math.}, 280:59--67, 2015.

\bibitem{Bai}
Z.~Bai and W.~Wu.
\newblock On greedy randomized {K}aczmarz method for solving large sparse
  linear systems.
\newblock {\em SIAM J. Sci. Comput.}, 40(1):A592--A606, 2018.

\bibitem{BB}
H.H. Bauschke and J.M. Borwein.
\newblock On projection algorithms for solving convex feasibility problems.
\newblock {\em SIAM Rev.}, 38(3):367--426, 1996.

\bibitem{Bauschke:Combettes}
H.H. Bauschke and P.L. Combettes.
\newblock {\em Convex analysis and monotone operator theory in {H}ilbert
  spaces}.
\newblock CMS Books in Mathematics/Ouvrages de Math\'{e}matiques de la SMC.
  Springer, New York, 2011.

\bibitem{Bauschke}
H.H. Bauschke, F.~Deutsch, H.~Hundal, and S.-H. Park.
\newblock Accelerating the convergence of the method of alternating
  projections.
\newblock {\em Trans. Amer. Math. Soc.}, 355(9):3433--3461, 2003.

\bibitem{Guoyin}
J.M. Borwein, G.~Li, and L.~Yao.
\newblock Analysis of the convergence rate for the cyclic projection algorithm
  applied to basic semialgebraic convex sets.
\newblock {\em SIAM J. Optim.}, 24(1):498--527, 2014.

\bibitem{Bregman}
L.M. Br\`egman.
\newblock Finding the common point of convex sets by the method of successive
  projection.
\newblock {\em Dokl. Akad. Nauk SSSR}, 162:487--490, 1965.

\bibitem{Censor}
Y.~Censor.
\newblock Row-action methods for huge and sparse systems and their
  applications.
\newblock {\em SIAM Rev.}, 23(4):444--466, 1981.

\bibitem{Dao}
M.N. Dao and M.K. Tam.
\newblock Union {A}veraged {O}perators with {A}pplications to {P}roximal
  {A}lgorithms for {M}in-{C}onvex {F}unctions.
\newblock {\em J. Optim. Theory Appl.}, 181(1):61--94, 2019.

\bibitem{Deutsch}
F.~Deutsch.
\newblock {\em Best approximation in inner product spaces}, volume~7 of {\em
  CMS Books in Mathematics/Ouvrages de Math\'{e}matiques de la SMC}.
\newblock Springer-Verlag, New York, 2001.

\bibitem{Elfving}
T.~Elfving, P.C. Hansen, and T.~Nikazad.
\newblock Convergence analysis for column-action methods in image
  reconstruction.
\newblock {\em Numer. Algorithms}, 74(3):905--924, 2017.

\bibitem{Franchetti}
C.~Franchetti and W.~Light.
\newblock On the von {N}eumann alternating algorithm in {H}ilbert space.
\newblock {\em J. Math. Anal. Appl.}, 114(2):305--314, 1986.

\bibitem{Gearhart}
W.B. Gearhart and M.~Koshy.
\newblock Acceleration schemes for the method of alternating projections.
\newblock {\em J. Comput. Appl. Math.}, 26(3):235--249, 1989.

\bibitem{Hansen}
P.C. Hansen and J.S. J{\o}rgensen.
\newblock A{IR} {T}ools {II}: algebraic iterative reconstruction methods,
  improved implementation.
\newblock {\em Numer. Algorithms}, 79(1):107--137, 2018.

\bibitem{Jiang}
M.~Jiang and G.~Wang.
\newblock Convergence studies on iterative algorithms for image reconstruction.
\newblock {\em IEEE Trans. Med. Imaging}, 22(5):569--579, 2003.

\bibitem{Kaczmarz}
S.~Kaczmarz.
\newblock Angen\"{a}herte aufl\"{o}sung von systemen linearer gleichungen.
\newblock {\em Bulletin International de l'Acad\'{e}mie Polonaise des Sciences
  et des Lettres. Classe des Sciences Math\'{e}matiques et Naturelles.
  S\'{e}rie A, Sciences Math\'{e}matiques}, 35:355--357, 1937.

\bibitem{Strohmer}
T.~Strohmer and R.~Vershynin.
\newblock A randomized {K}aczmarz algorithm with exponential convergence.
\newblock {\em J. Fourier Anal. Appl.}, 15(2):262--278, 2009.

\end{thebibliography}

\end{document}